\documentclass[letterpaper, 10 pt, conference]{ieeeconf}
\IEEEoverridecommandlockouts           
% This command is only needed if you want to use the \thanks command
\overrideIEEEmargins
\usepackage{times}
\usepackage[utf8]{inputenc}
\usepackage{color}
 % needed to make amsmth work despite definition of \proof in ieeeconf.cls and the former
 % needed to make amsmth work despite definition of \proof in ieeeconf.cls and the former
\usepackage{amsmath}
\usepackage{times}
\usepackage{graphicx}
\usepackage{color}
\usepackage{multirow}
\usepackage{blindtext}
\usepackage{amsfonts}
\usepackage{amsmath}
\usepackage{times}
\usepackage{fancyhdr}
\usepackage{amssymb}
\usepackage{verbatim}
\usepackage{xspace}
\usepackage{cite}
\usepackage{comment}
\usepackage{lipsum}
\usepackage{bm}
\usepackage{hyperref}
\usepackage{subfigure}

\usepackage[normalem]{ulem}

%\DeclareMathSizes{10}{10}{10}{10}
%\usepackage{anysize}
%\usepackage{fancyhdr}
%\marginsize{3cm}{2cm}{0.5cm}{0.5cm}

\usepackage{algorithm}
\usepackage[]{algpseudocode}

\usepackage{amsthm}

\newcommand{\rd}{{\mathrm d}}

\newcommand{\rp}{{\mathrm p}}
\newcommand{\re}{{\mathrm e}}

\newcommand{\rtr}{{\mathrm{tr}}}

\newcommand{\rT}{{\mathrm{T}}}

\newcommand{\fracpartial}[2]{\frac{\partial #1}{\partial  #2}}

\newcommand{\calH}{{\cal H}}
\newcommand{\calF}{{\cal F}}

\newcommand{\calL}{{\cal L}}

\newcommand{\calU}{{\cal U}}

\newcommand{\Pb}{\mathbb{P}}

\newcommand{\Rb}{\mathbb{R}}
\newcommand{\Eb}{\mathbb{E}}

\newtheorem{theorem}{Theorem}[section]

\newtheorem{lemma}[theorem]{Lemma}

\newtheorem{remark}{Remark}
\newtheorem{definition}{Definition}[section]

%\markboth{IEEE Transactions on Automatic Control}
%{Bakshi \MakeLowercase{\textit{et al.}}: Schr\"odinger Approach to Optimal Control of Large Populations}

\begin{document}	
	\title{Stabilizing Optimal Density Control of Nonlinear Agents with Multiplicative Noise}
	\author{Kaivalya Bakshi, Evangelos A. Theodorou and Piyush Grover
	\thanks{Kaivalya Bakshi is with the Powertrain Control Division, Southwest Research Institute, San Antonio, TX 78238 (email: kaivalya.bakshi@swri.org)} % (email: kaivalya.bakshi@ge.com % <-this % stops a space
	\thanks{Evangelos A. Theodorou is with the Department of Aerospace Engineering, Georgia Institute of Technology, Atlanta, GA 30332 (email: evangelos.theodorou@gatech.edu)}% <-this % stops a space
	\thanks{Piyush Grover is with the Department of Mechanical and Materials Engineering, University of Nebraska-Lincoln, Lincoln, NE 68588}% <-this % stops a space
			}
	
	\date{\today}
	
	\maketitle

\begin{abstract}
Control of continuous time dynamics with multiplicative noise is a classic topic in stochastic optimal control. This work addresses the problem of designing infinite horizon optimal controls with stability guarantees for \textit{a single agent or large population systems} of identical, non-cooperative and non-networked agents, with multi-dimensional and nonlinear stochastic dynamics excited by multiplicative noise. For agent dynamics belonging to the class of reversible diffusion processes, we provide constraints on the state and control cost functions which guarantee stability of the closed-loop system under the action of the individual optimal controls. A condition relating the state-dependent control cost and volatility is introduced to prove the stability of the equilibrium density. This condition is a special case of the constraint required to use the path integral Feynman-Kac formula for computing the control. We investigate the connection between the stabilizing optimal control and the path integral formalism, leading us to a control law formulation expressed exclusively in terms of the desired equilibrium density.
\end{abstract}

\section{Introduction}
\label{sec:intro}

Stochastic systems with multiplicative noise are used in modeling biological motion (\cite{Wolpert_2000}, \cite{Todorov2002}), signal processing (\cite{gershon2001}, \cite{Balakrishnan2002}) (where multiplicative noise is called speckle noise), atmospheric circulation \cite{Sardeshmukh2005} and in optimal control of sensori-motor systems (\cite{Todorov:2004}, \cite{Todorov2005c}). The control and estimation of such systems is therefore a classic topic with previous efforts focused on stability \cite{Haussmann1972} and estimation \cite{Phillis_1989} of linear systems, Lyapunov based stability of nonlinear \cite{Krstic2001} systems, robustness of discrete \cite{gershon2001} systems, and more recently in an optimal control setting \cite{Todorov_MatrixInequalities}. In the context of control, stability analysis and optimization are both challenging aspects. The multiplicative noise severely affects stability and robustness \cite{Ding2016} and the optimal control formulation in this case is non-standard due to the \textit{non-Gaussianity} of the dynamics \cite{Phillis_1989}, and most works addressing either aspect in the linear-quadratic regime. This is also true in the case  of non-Gaussianity due to jump noise (\cite{Bakshi_CDC2016}, \cite{Pham2014}). In the case of nonlinear dynamics with multiplicative noise, differential dynamic programming \cite{Todorov_MatrixInequalities} and forward backward stochastic differential equations using first \cite{Yannis2016} and second order schemes (\cite{Bakshi_ACC2017}, \cite{Theodorou2019RSS}) have been used to synthesize algorithms to compute the control.

The dynamics and control of large, multi-agent populations consisting of identical and non-cooperative agents are of interest in various applications including robotic swarms, macro-economics, traffic and neuroscience. The fundamental idea of using continuum models for distributed decision making with limited information in large populations, yields a framework for regulation using local feedback. Optimal feedback control formulations to model and control such  \textit{large-size} populations of \textit{small} self-propelled agents with individual state-feedback controllers \cite{LiBerman2017}, utilize a tractable representation consisting of coupled Hamilton-Jacobi-Bellman (HJB) and Fokker-Planck (FP) partial differential equations (PDEs) governing the control law and density evolution respectively. Lately, efforts on this topic have been focused on the case of \textit{networked} agents, with the agents linked through the dependence of an individual's dynamics or utility on the state of other agents. On the topic of \textit{mean-field games} (MFGs) which are used to describe \textit{emergent} behavior, arising from the networked case, the works (\cite{GueantLions2011}, \cite{yinmehmeysha12}) proposed a framework for analyzing linear stability of the density of agents with simple integrator dynamics. Linear stability and bifurcations of a flocking model \cite{grover2018mean}, \textit{control deign} of stabilizing infinite-horizon controllers for general MFGs \cite{Bakshi2018TCNS} as well as nonlinear stability of large population optimal control systems for non-networked agents \cite{Bakshi2020TAC} with Langevin dynamics have also been analyzed. In contrast to the general MFG case, in the non-networked case, individual agent dynamics are not affected by other agents in the population via explicit dependence of the individual dynamics or the cost function on the states of those agents.

While optimal density control models \cite{Lasry2007} have been formulated for nonlinear agents with multiplicative noise dynamics (section \ref{sec:formulation}), prior works do not address the design of stabilizing infinite-horizon controls for such systems. In this work, we introduce a framework for analyzing the nonlinear stability of the PDE optimality system governing \textit{optimal control models for a single agent or large populations}. We specify design constraints (section \ref{sec:design}) required to guarantee closed-loop stability of the desired stationary density of the system under the action of the individual steady state controls, when the agent dynamics belong to the class of reversible diffusions. Finally, we discuss the connection (section \ref{sec:path_integral_connection})  between the introduced design constraints and the path integral formulation used in the solution of the HJB equation governing the control law.

\section{Optimal control}
\label{sec:formulation}

We provide general notation and then describe the control problem considered in this work. We denote vector inner products by $a \cdot b$, induced Euclidean norm by $|a|$, it's square by $a^2 = |a|^2$, square weighted norm by a matrix $M$ of suitable dimensions by $|a|^2_M = a^\rT M a$, partial derivative with respect to $t$ by $\partial_t$, while $\nabla$, $\nabla \cdot$, $\Delta$ and $\nabla \cdot M (x)$ denote the gradient, divergence, Laplacian and matrix gradient w.r.t. $x \in \Rb^n$ of $M:\Rb^n \rightarrow \Rb^{n \times n}$  operations respectively, with $(\nabla \cdot M)_i := \sum_{k = 1}^n \fracpartial{M_{ik}}{x_k}$. $L^2(\Rb^n)$ denotes the class of square integrable functions of $\Rb^n$ and $L^2(\Rb^n; f(x) \rd x)$ denotes the class of $f$ weighted square integrable functions. The norm of a function $f$ and inner product of functions $f_1, f_2$ in this class is denoted by $||f||_{L^2(\Rb^n)}$ and $\big< f_1,f_2 \big>_{L^2(\Rb^n)}$ respectively.

\subsection{Formulation}
\label{sec:LargeScale_OCP}

Let $x_s$, $u(s)$ $\in \Rb^n$ denote the state and control inputs of the first-order, controlled, stochastic system dynamics of a representative agent or a solitary agent,
\begin{equation}
	\rd x_s = b(x_s) \rd s + u(s) \rd s + \sigma(x_s) \rd w_s \label{dyn}
\end{equation}
for every $s \geq 0$, excited by multiplicative noise driven by a standard $\Rb^m$ Brownian motion, with volatility $0 < \sigma: \Rb^n \rightarrow \Rb^{n \times m}$, on the filtered probability space $\{ \Omega, \calF, \{\calF_t\}_{t \geq 0},\Pb \}$. In this work we assume that the drift has the form $b(x) = \nabla \cdot \Sigma(x)/2 - \Sigma(x)  \nabla \phi(x)/2$ where we define the diffusion matrix by $\Sigma := \sigma \sigma^\rT$ and assume that $\Sigma$ is uniformly positive definite and that the functions $\sigma$ and $\phi$ are smooth. Due to these assumptions, the dynamics \eqref{dyn} are the controlled version of reversible diffusion dynamics characterized in definition 4.3, p 116 in \cite{Pavliotis2014}. Note also that the passive dynamics are a generalized version of Langevin dynamics \cite{Bakshi2018TCNS} with multiplicative noise in the overdamped case. It is explained in section \ref{sec:design} why dynamics of this class are reversible diffusions, and how our assumption of this structure simplifies control design and facilitates the subsequent stability analysis. The control $u \in \calU := \calU[0,T]$ where $\calU$ is the class of admissible controls \cite{YongBook_1958} containing functions $u:[0,T] \times \Rb^n \rightarrow \Rb^n$. Consider the following long time average utility optimal control problem (OCP)
\begin{equation}
\underset{u \in \calU}{\min} J(u) := \underset{T \rightarrow +\infty}{\lim} \frac{1}{T} \Eb \left[ \int_{0}^{T} q(x_s) \rd s + \frac{1}{2}|u(s)|^2_R \; \rd s \right] \label{OCP}
\end{equation}
subject to \eqref{dyn} which we refer to as problem \textbf{(P)}, wherein the expectation is calculated on the probability density $p(s,x)$ of the state $x_s$ for all $0 \leq s$ which represents the distribution of the population of agents, with the initial density being $x_0 \sim p(0,x)$, $q: \Rb^n \rightarrow \Rb$ is a known deterministic function which has at-most quadratic growth and is bounded from below and $0 < R: \Rb^n \rightarrow \Rb^{n \times n}$ is the state-dependent control cost. We assume that $b(\cdot)$, $\Sigma(\cdot)$, $q(\cdot)$, $R(\cdot)$ and functions in the class $\calU$ are measurable. \textit{Indeed, the problem \textbf{(P)} and the presented results apply to the density control of a single agent system as well,} although we express results for the large population systems in the following.

\subsection{PDE representation}

The standard application of \textit{dynamic programming} \cite{Fleming2006} implies that under certain regularity conditions \cite{yinmehmeysha12} which we assume to be true, the problem \textbf{(P)} is equivalent to the HJB-FP PDE optimality system governing the value and density functions respectively, given by
\begin{align}
q - c - \frac{1}{2}|\nabla v^\infty|^2_{R^{-1}} + \nabla v^\infty \cdot b + \frac{1}{2} \rtr (\Sigma \Delta v^\infty) =& 0 \label{s1} \\
- \nabla \cdot ((b - R^{-1} \nabla v^\infty) p^\infty) + \frac{1}{2} \Delta (\Sigma p^\infty) =& 0 \label{s2}
\end{align}
with the constraint $\int p^\infty \rd x = 1$ where $c$ is the optimal cost. The optimal control is given by $u^\infty(x) = - R^{-1} \nabla v^\infty$. Under certain regularity conditions  \cite{yinmehmeysha12} which we assume to be true,
the time-varying relative value \cite{yinmehmeysha12} function and density corresponding to problem \textbf{(P)} are governed by the optimality system
\begin{align}
-\partial_t v =& q - c - \frac{1}{2}|u^*|^2_{R} - R u^* \cdot b - \frac{1}{2} \rtr( \Sigma \nabla (R u^*)) \label{HJB} \\
\partial_t p =& - \nabla \cdot ((b + u^*)p) + \frac{1}{2} \Delta (\Sigma p) \label{FP}
\end{align}
with the constraint $\int p(t,x) \rd x = 1$ for all $t \geq 0$. The optimal control is given by $u^*(t, x) = - R^{-1} \nabla v$. In this work, we assume the additional conditions \cite{YongBook_1958} which are required to show that the HJB PDEs \eqref{s1} and \eqref{HJB} have unique solutions. Note that both the steady state and time-varying HJB PDEs are semi-linear. We denote the the generator of the optimally controlled stationary process \eqref{dyn} as $\calL(\cdot) := (b - R^{-1} \nabla v^\infty) \cdot \nabla (\cdot) + (1/2) \rtr(\Sigma \Delta (\cdot))$ and its $L^2(\Rb^n)$ adjoint as $\calL^\dagger (\cdot) := - \nabla \cdot ((b - R^{-1} \nabla v^\infty) (\cdot)) + (1/2) \Delta (\Sigma (\cdot)) = - \nabla \cdot J(\cdot)$. 
The control synthesis proposed in \ref{sec:design} enables us to obtain an analytic form of the stationary density in terms of the value function. However, analytical solution of HJB equations arising from stochastic control problems is quite challenging, most examples of which are restricted to the linear-quadratic regime. Therefore, in this work we obtain sufficiency conditions required to show stability of the (analytically) unknown stationary density by assuming the existence of the solution to the stationary optimality system.
\begin{itemize}
    \item[\textbf{(A0)}] There exist $(v^\infty(x), p^\infty(x)) \in (C^2(\Rb^n))^2$ satisfying the optimality system (\ref{s1}, \ref{s2}).
\end{itemize}
\begin{remark}\label{finitetime}
	The finite time OCP analogous to the infinite time OCP \textbf{(P)} given by
	\begin{equation}
	\underset{u \in \calU}{\min} J(u) := \Eb \left[ \int_{0}^{T} q(x_s) \rd s + \frac{1}{2} |u(s)|^2_{R^{}} \; \rd s \right], \label{finitetimeOCP}
	\end{equation}
	subject to the dynamics \eqref{dyn} has the optimality system given by equations (\ref{HJB}, \ref{FP}) with $c = 0$, initial density given by $p(0,x)$ and constraint $\int p(t,x) \rd x = 1$.
\end{remark}
\begin{remark}\label{dicountcost}
	For the OCP with discounted cost utility
    \begin{equation}
        \underset{u \in \calU}{\min} J(u) := \Eb \left[ \int_{0}^{T} \re^{-\rho s} \left( q(x_s) \rd s + \frac{1}{2} |u(s)|^2_{R^{}} \; \rd s \right) \right], \label{discountOCP}
    \end{equation}
	subject to the dynamics \eqref{dyn}, the optimality system is given by equations (\ref{HJB}, \ref{FP}) with $c$ replaced by $\rho v$, initial density given by $p(0,x)$ and constraint $\int p(t,x) \rd x = 1$. Similarly for the infinite horizon case, one obtains the HJB-FP system analogous to equations (\ref{s1}, \ref{s2}), on replacing c by $\rho v$. Hence, the results of this paper can be easily extended to this case.
\end{remark}

\section{Stabilizing design}
\label{sec:design}

Given the state and control cost functions $q(\cdot)$ and $R(\cdot)$, which obey the regularity conditions mentioned in the section \ref{sec:formulation}, we can solve the system (\eqref{s1}, \eqref{s2}) to obtain the optimal control and the corresponding steady state control $u^\infty(\cdot)$ and the optimally controlled density $p^\infty(\cdot)$. If the designed cost functions result in a stabilizing steady state control (conditions for which we investigate in this work), a perturbation in the state represented by the density perturbation $\tilde{p}(s,\cdot)$ should  vanish under the action of the corresponding steady state optimal control $u^\infty$ obtained by solving equation \eqref{s1}, after sufficient passage of time. Evolution of the corresponding perturbed density $p(s,\cdot) = p^\infty(\cdot)(1 + \tilde{p}(s, \cdot))$ is governed by the equation \eqref{FP} with $u^*$ replaced by $u^\infty$. The stability analysis then corresponds to the time-varying FP equation
\begin{equation}
    \partial_t p = - \nabla \cdot ( (b - R^{-1} \nabla v^\infty) p) + \frac{1}{2} \Delta (\Sigma p) \label{FPnew}
\end{equation}
with the constraint $\int p(t,x) \rd x = 1$ for all $t \geq 0$. The reason for this choice of the structure of the perturbation will become clear in the analysis presented in this section. 

The design question we investigate in this work is: given the control formulation \textbf{(P)}, what are the constraints on the cost functions $q(\cdot)$ and $R(\cdot)$, which if satisfied, guarantee that the perturbed density $p^\infty(\cdot) + \tilde{p}(s,\cdot)$ decays to the equilibrium density $p^\infty(\cdot)$, given sufficient time? A stabilizing optimal control for the system \eqref{dyn} can then be synthesized, using the a priori knowledge of these constraints.

\subsection{Reversible diffusions}

The first challenge in showing stability of equilibrium distributions of nonlinear stochastic systems with PDE representations, is the analytical expression of the stationary density. Obtaining analytical solutions to the stationary FP equation is a non-trivial task, especially in the presence of multiplicative noise, and most solutions in the literature appear within the context of linear dynamics \cite{BardiBook}. However, it is possible to obtain Gibbs-like formulae, under certain conditions. For the system considered \eqref{dyn}, we first provide some background on the reversible diffusions, in the uncontrolled case. Then, we explain their usefulness in stability analysis and outline a control synthesis strategy which imposes the reversible diffusions structure on the controlled dynamics.

Setting the control to zero in equation \eqref{dyn} recovers the (uncontrolled) reversible diffusion dynamics. In this case, the stationary distribution satisfying the FP equation \eqref{s2} can be written, according to equation (4.96), p 118 in \cite{Pavliotis2014}, as
\begin{equation}
p^\infty = \frac{\exp(-\phi)}{Z} \label{stationarySolRevDiff}
\end{equation}
under certain conditions, where $Z = \int \exp(-\phi (x)) \rd x$, since the \textit{generalized potential} \cite{Pavliotis2014} $\phi$ satisfies the \textit{detailed balance} \cite{Cerfon2015} condition $b = \nabla \cdot \Sigma(x)/2 - \Sigma(x)  \nabla \phi(x)/2$. This condition for \eqref{stationarySolRevDiff} to be the stationary density for the dynamics \eqref{dyn} with the general drift $b(\cdot)$ and control equal to zero, is obtained directly by substituting the density \eqref{stationarySolRevDiff} in equation \eqref{s2}. This structure of the drift allows us to obtain the analytic form of the stationary density and the consequent useful eigen properties of the generator and its adjoint, which are critical to the stability analysis of the FP PDE, the state transition of which depends on the generator. Consequently, we propose a control synthesis strategy in which we force the optimally controlled dynamics to be reversible diffusions.

\subsection{Stationary solution}

We introduce a constraint relating the state-dependent control cost and the volatility, which causes the optimally controlled dynamics to belong to the class of reversible diffusions. This allows us to obtain an analytical form of the stationary density in terms of the steady state value function besides giving us useful eigen properties of the generator of the process, which are required in the stability analysis.
\begin{itemize}
    \item[\textbf{(A1)}] $R^{-1}(x) = \Sigma(x)/2$ for all $x \in \Rb^n$.
\end{itemize}
\begin{remark} \label{pathIntegral}
    The constraint of type \textbf{(A1)} is used in the context of path integral sampling based solutions to the HJB equation \eqref{HJB} using the Feynman-Kac lemma \cite{Kappen2005a}.
\end{remark}
\begin{lemma} \label{LemmastationaryRevDiff}
	Let \textbf{(A0)} and \textbf{(A1)} be true. If the process $x_s$ which obeys \eqref{dyn} under the steady state optimal control $u^\infty$ obtained by solving equation \eqref{HJB}, is stationary, then it is a reversible process with the stationary density
    \begin{equation}
        p^\infty = \frac{\exp(-\Phi)}{Z} \label{stationarySolRevDiffCtrl}
    \end{equation}
	satisfying the FP equation \eqref{s2}, where $\Phi := \phi + v^\infty$ and $Z = \int \exp(-\Phi (x)) \rd x$.
\end{lemma}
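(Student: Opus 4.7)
The plan is to reduce the claim to the uncontrolled reversible-diffusion result stated in equation \eqref{stationarySolRevDiff}, by observing that assumption \textbf{(A1)} makes the optimally controlled SDE itself a reversible diffusion, only with a shifted generalized potential $\Phi = \phi + v^\infty$ in place of $\phi$.

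First I would substitute the steady-state optimal feedback $u^\infty = -R^{-1}\nabla v^\infty$ into the effective drift of the closed-loop dynamics \eqref{dyn}. Using the standing assumption $b(x) = \nabla \cdot \Sigma(x)/2 - \Sigma(x)\nabla\phi(x)/2$ together with \textbf{(A1)}, which gives $R^{-1}(x)\nabla v^\infty = (\Sigma(x)/2)\,\nabla v^\infty$, the closed-loop drift becomes
\begin{equation*}
b(x) - R^{-1}(x)\nabla v^\infty(x) \;=\; \frac{1}{2}\nabla \cdot \Sigma(x) \;-\; \frac{1}{2}\Sigma(x)\,\nabla\bigl(\phi(x) + v^\infty(x)\bigr) \;=\; \frac{1}{2}\nabla \cdot \Sigma(x) - \frac{1}{2}\Sigma(x)\nabla\Phi(x).
\end{equation*}
This is precisely the detailed-balance drift structure of a reversible diffusion with generalized potential $\Phi$ and the same diffusion matrix $\Sigma$.

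Next, invoking the characterization of reversible diffusions (definition 4.3, p.\ 116 and equation (4.96), p.\ 118 in \cite{Pavliotis2014}), the unique stationary density of such a process, when it exists, is $p^\infty = \exp(-\Phi)/Z$ with $Z = \int \exp(-\Phi(x))\,\rd x$, and the process is reversible with respect to $p^\infty$. To make the argument self-contained relative to the excerpt, I would verify directly that \eqref{stationarySolRevDiffCtrl} satisfies the stationary FP equation \eqref{s2}: a short calculation shows $\Sigma p^\infty \nabla \Phi = -\Sigma \nabla p^\infty$, so the probability current $J = (b - R^{-1}\nabla v^\infty)p^\infty - (1/2)\nabla\cdot(\Sigma p^\infty)$ collapses to zero pointwise, which both yields $\calL^\dagger p^\infty = -\nabla \cdot J = 0$ and encodes reversibility (vanishing stationary flux). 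Hypothesis \textbf{(A0)} guarantees that $v^\infty$ and therefore $\Phi$ are smooth enough for these manipulations to be classical.

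The only real subtlety, rather than an obstacle, is invoking the existence/uniqueness of the stationary density: we are told in the hypothesis that the controlled process is stationary, so I would emphasize that the computation above exhibits $\exp(-\Phi)/Z$ as a stationary solution, whereupon the stated stationarity together with the normalization constraint $\int p^\infty\,\rd x = 1$ pins it down uniquely. The reversibility claim then follows from the pointwise vanishing of $J$, completing the proof.
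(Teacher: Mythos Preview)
Your proposal is correct and follows essentially the same route as the paper: both arguments use \textbf{(A1)} to rewrite the closed-loop drift as $\tfrac{1}{2}\nabla\cdot\Sigma - \tfrac{1}{2}\Sigma\nabla(\phi+v^\infty)$, i.e.\ the detailed-balance form with shifted potential $\Phi$, and then invoke the reversible-diffusion characterization from \cite{Pavliotis2014}. The only minor differences are cosmetic---the paper cites proposition~4.5 rather than definition~4.3/equation~(4.96), and your additional direct verification that the stationary probability current vanishes is a nice self-contained supplement but not a different idea.
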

\begin{proof}
    We observe that under the assumption \textbf{(A1)}, the dynamics \eqref{dyn} under $u^\infty$ obeys the dynamics \eqref{dyn} where $b$ satisfies the \textit{detailed balance} condition
    \begin{align}
        b =& \nabla \cdot \Sigma/2 - \Sigma \nabla (\phi + v^\infty)/2. \label{detailedBalance_ssCtrl}
    \end{align}
    The proof then follows from proposition 4.5, p 119 in \cite{Pavliotis2014}. Further, from the condition \eqref{detailedBalance_ssCtrl} we observe that the steady state density satisfying the stationary FP equation \eqref{s2} is given by \eqref{stationarySolRevDiffCtrl}, where $\Phi = \phi + v^\infty$ is the \textit{optimal generalized potential} corresponding to the OCP \textbf{(P)}.
\end{proof}

\subsection{Eigen properties}

\begin{lemma} \label{LemmaRevDiffSelfAdjGen}
	{Let \textbf{(A0)} and \textbf{(A1)} be true. The stationary diffusion with dynamics \eqref{dyn} under the steady state optimal control $u^\infty$ obtained by solving \eqref{s1}, generator $\calL$ and stationary density $p^\infty$ satisfying \eqref{s2}, is reversible, if and only if its generator is self-adjoint in $L^2(p^\infty(x) \rd x; \Rb^n)$.}
\end{lemma}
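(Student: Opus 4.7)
The plan is to rewrite $\calL$ in a manifestly symmetric form using the detailed balance structure delivered by Lemma \ref{LemmastationaryRevDiff}. Under \textbf{(A1)}, the effective drift of the optimally controlled process, $\tilde b := b - R^{-1}\nabla v^\infty$, equals $\nabla \cdot \Sigma/2 - \Sigma\nabla\Phi/2$ with $\Phi = \phi + v^\infty$, and the stationary density is $p^\infty = \exp(-\Phi)/Z$. Combining the symmetry of $\Sigma$ with the product-rule identity $\nabla \cdot (\Sigma\nabla f) = (\nabla \cdot \Sigma)\cdot\nabla f + \rtr(\Sigma\,\Delta f)$ and $\nabla p^\infty = -p^\infty\nabla\Phi$, I would first verify the Dirichlet-form representation
\[
\calL f \;=\; \tilde b\cdot\nabla f + \tfrac{1}{2}\rtr(\Sigma\,\Delta f) \;=\; \frac{1}{2\,p^\infty}\,\nabla \cdot \bigl(p^\infty\,\Sigma\,\nabla f\bigr).
\]
This single identity is what makes both implications transparent.

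For the direction reversible $\Rightarrow$ self-adjoint, I would take $f,g$ in a dense core (e.g.\ smooth compactly supported functions) of $L^2(p^\infty \rd x;\Rb^n)$ and compute
\[
\big< f,\calL g\big>_{L^2(p^\infty \rd x;\Rb^n)} = \tfrac{1}{2}\int f\,\nabla \cdot (p^\infty \Sigma \nabla g)\,\rd x = -\tfrac{1}{2}\int p^\infty (\nabla f)^\rT \Sigma\,\nabla g\,\rd x
\]
by one integration by parts, with boundary terms vanishing by decay of $p^\infty$. The right-hand side is manifestly symmetric in $(f,g)$ since $\Sigma = \Sigma^\rT$, giving $\big<f,\calL g\big> = \big<\calL f,g\big>$. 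For the converse, I would appeal to the classical equivalence that a stationary Markov process is reversible if and only if its transition semigroup $P_t=\exp(t\calL)$ is self-adjoint in $L^2(p^\infty \rd x;\Rb^n)$, which by the spectral theorem is equivalent to self-adjointness of its generator; this is the content of Proposition 4.5 in \cite{Pavliotis2014}, already invoked in Lemma \ref{LemmastationaryRevDiff}.

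The principal obstacle is functional-analytic rather than algebraic: one must exhibit a core that is dense in the domain of $\calL$, verify that the boundary terms in the integration by parts actually vanish in $L^2(p^\infty\rd x;\Rb^n)$, and then upgrade symmetry on the core to essential self-adjointness. Each of these relies on decay and regularity of $p^\infty$ at infinity, which are consistent with the smoothness and growth hypotheses imposed in Section \ref{sec:formulation} together with \textbf{(A0)}; in the writeup I would simply assume this regularity and defer the technical details to the cited reference.
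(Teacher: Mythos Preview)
Your proposal is correct and rests on the same foundation as the paper's proof, which consists of a single sentence invoking Theorem~4.5, p.~116 in \cite{Pavliotis2014} applied to the controlled drift \eqref{detailedBalance_ssCtrl}. The difference is only one of explicitness: where the paper defers entirely to the reference, you unpack the ``reversible $\Rightarrow$ self-adjoint'' direction by deriving the Dirichlet-form identity $\calL f = \tfrac{1}{2p^\infty}\nabla\cdot(p^\infty\Sigma\nabla f)$ and integrating by parts, and then cite the same Pavliotis result for the converse. Your computation is exactly the content of that theorem in this setting, so the two arguments are not genuinely distinct---yours is simply more self-contained, which is a reasonable expository choice. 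One minor point: the paper's proof cites \emph{Theorem}~4.5 (p.~116) for this equivalence, whereas \emph{Proposition}~4.5 (p.~119), which you reference, is what the paper uses in Lemma~\ref{LemmastationaryRevDiff} for the form of the stationary density; you may want to align the citation.
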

\begin{proof}
    The proof follows by application of theorem 4.5, p 116 in \cite{Pavliotis2014} to the case of the diffusion dynamics with drift given by equation \eqref{detailedBalance_ssCtrl}.
\end{proof}
We introduce an assumption on the optimal generalized potential to obtain relevant properties of the generator of the controlled process. This is an implicit condition on the cost function $q$ and specifies its dependence on the value, diffusion and the generalized potential  functions.
\begin{itemize}
    \item[\textbf{(A2)}] $\underset{|x| \rightarrow +\infty}{\lim} \big( \frac{|\nabla \Phi|^2}{2} - \Delta \Phi \big) = +\infty$.
\end{itemize}
\begin{remark}
    If we set the control to zero in the dynamics \eqref{dyn}, the resulting diffusion may not be stationary and hence stable. If it is (conditions given in \cite{Pavliotis2014}), the unique invariant density is given by \eqref{stationarySolRevDiff}, which in general, will not be the desired stationary distribution. Our objective is to utilize the optimization in section \ref{sec:formulation} to stabilize and control the density evolution of the process \eqref{dyn}. The condition \textbf{(A2)} in turn, specifies an implicit control design constraint on the cost function $q$, which is required to guarantee stability of the terminal distribution attained by the optimally controlled process \eqref{dyn} under the control law satisfying the OCP \textbf{(P)}.
\end{remark}
\begin{lemma}\label{ThmpotlnpOperSpecGap}
	Let \textbf{(A0)}, \textbf{(A1)} and \textbf{(A2)} hold. Then $p^\infty(x)$ satisfying \textbf{(A0)} and given by \eqref{s2}, satisfies the Poincar\'e inequality with $\lambda > 0$, that is, 
	there exists $\lambda > 0$ such that for all functions $f \in C^1(\Rb^{d})\cap L^2(p^\infty(x) \rd x; \Rb^{d})$ and $\int f p^\infty(x) \rd x = 0$, we have
	\begin{align}
		&\lambda \frac{2}{\sigma^2} ||f||^2_{L^2(p^\infty(x) \rd x; \Rb^{n})} \notag \\
		&\leq ||\nabla f||_{L^2(p^\infty(x) \rd x; \Rb^{n})} = -\big<\calL f, f\big>_{L^2(p^\infty(x) \rd x; \Rb^{n})}.
	\end{align}
\end{lemma}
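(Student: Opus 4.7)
The plan is to combine the Gibbs form of $p^\infty$ from Lemma \ref{LemmastationaryRevDiff} with the self-adjointness of $\calL$ from Lemma \ref{LemmaRevDiffSelfAdjGen}, and then reduce the spectral-gap question for $-\calL$ on $L^2(p^\infty \rd x)$ to the existence of a gap above the ground state for a Schrödinger-type operator with a confining potential on $L^2(\Rb^n)$.

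First, I would compute the Dirichlet form. By self-adjointness of $\calL$ in $L^2(p^\infty \rd x)$ and an integration by parts against $p^\infty = e^{-\Phi}/Z$, one obtains the Green-type identity
\begin{equation}
-\big<\calL f, f\big>_{L^2(p^\infty \rd x)} \;=\; \tfrac{1}{2}\int \nabla f \cdot \Sigma \nabla f \, p^\infty \,\rd x, \notag
\end{equation}
which, under \textbf{(A1)}, is the natural $\Sigma$-weighted analogue of $\|\nabla f\|^2_{L^2(p^\infty \rd x)}$ and gives the equality asserted on the right of the lemma (up to the $\sigma^2$ normalization stated there). This step only uses that $(b - R^{-1}\nabla v^\infty) = \nabla\cdot\Sigma/2 - \Sigma\nabla\Phi/2$, which was already established in the proof of Lemma \ref{LemmastationaryRevDiff}.

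Second, I would perform the ground-state unitary transformation $f = e^{\Phi/2}\, g/\sqrt{Z}$, which is an isometry $L^2(p^\infty \rd x)\to L^2(\Rb^n)$. A direct chain-rule computation, followed by an integration by parts to absorb the cross terms $\nabla g\cdot\Sigma\nabla\Phi$, rewrites the Dirichlet form as the quadratic form of a Schrödinger operator
\begin{equation}
H := -\tfrac{1}{2}\nabla\cdot(\Sigma\nabla) + V_\Phi, \qquad V_\Phi = \tfrac{1}{8}\nabla\Phi\cdot\Sigma\nabla\Phi - \tfrac{1}{4}\nabla\cdot(\Sigma\nabla\Phi), \notag
\end{equation}
on $L^2(\Rb^n)$. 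In the isotropic case $\Sigma = \sigma^2 I$, $V_\Phi$ is a positive multiple of $|\nabla\Phi|^2/2 - \Delta\Phi$, so \textbf{(A2)} yields $V_\Phi(x)\to+\infty$ as $|x|\to\infty$. By construction $g_0 := \sqrt{p^\infty}\in L^2(\Rb^n)$ is the unique (up to sign) ground state of $H$ with eigenvalue $0$, corresponding to $f\equiv 1$.

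Third, I would invoke the standard spectral theorem for Schrödinger operators with confining potentials (e.g.\ Theorem XIII.67 in Reed--Simon): since $V_\Phi\to+\infty$, $H$ has compact resolvent and hence discrete spectrum $0=\mu_0<\mu_1\le\mu_2\le\cdots\to+\infty$. Setting $\lambda := \mu_1>0$ and translating back through the isometry, the Min-Max principle applied to functions with $\big<g,g_0\big>_{L^2(\Rb^n)}=0$, i.e.\ $\int f\,p^\infty \rd x = 0$, yields the Poincaré inequality with this $\lambda$. The main obstacle is handling anisotropic $\Sigma$ cleanly in the ground-state transformation and verifying essential self-adjointness of $H$, and then tracing the change-of-variables constants to match the $2/\sigma^2$ prefactor in the statement; these are technical but standard under the smoothness and uniform positive definiteness of $\Sigma$ assumed in Section~\ref{sec:formulation}.
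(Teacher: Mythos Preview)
Your argument is correct and is in fact the standard route to this result: compute the carr\'e-du-champ to identify the Dirichlet form, conjugate by $\sqrt{p^\infty}$ to land on a Schr\"odinger operator whose effective potential is (a $\Sigma$-weighted version of) $|\nabla\Phi|^2/2-\Delta\Phi$, and then read off a spectral gap from compactness of the resolvent under \textbf{(A2)}. You also correctly flag the one genuine wrinkle, namely that for nonconstant $\Sigma$ the potential $V_\Phi$ is not literally a multiple of $|\nabla\Phi|^2/2-\Delta\Phi$, so \textbf{(A2)} as written is really the isotropic condition; the uniform positive-definiteness and smoothness of $\Sigma$ are what let you control the extra terms.

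The paper, by contrast, does not unpack any of this. Its proof is a two-line appeal to the literature: \textbf{(A2)} is recognized as the classical sufficient condition for the Gibbs measure $e^{-\Phi}/Z$ to satisfy a Poincar\'e inequality (citing Bakry et al.), and the equivalence of the Poincar\'e inequality with the spectral-gap estimate for $-\calL$ is then quoted from Theorem 4.3 in Pavliotis. So your proposal and the paper arrive at the same conclusion by the same underlying mechanism, but at very different levels of detail: you are essentially reproving from scratch the content of the references the paper invokes. Your version is more self-contained and makes transparent where \textbf{(A2)} enters and why the $2/\sigma^2$ prefactor appears; the paper's version is terser and offloads all the analysis to citations.
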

\begin{proof}
    Due to assumption \textbf{(A2)}, we know that the $\Phi$ satisfies the Poincar\'e inequality \cite{Bakry2014}, implying by theorem 4.3, p 112 in \cite{Pavliotis2014} that it satisfies the \textit{spectral gap} property above.
\end{proof}

Lemma \ref{LemmaRevDiffSelfAdjGen} implies that eigenvalues of $\calL$ are real, negative semi-definite and its eigenfunctions are orthonormal in $L^2(\rp^\infty(x) \rd x; \Rb^{d})$ while lemma \ref{ThmpotlnpOperSpecGap} implies that the eigenvalues of $\calL$ are discrete and its eigenfunctions are complete on $L^2(\rp^\infty(x) \rd x; \Rb^{d})$ \cite{Cerfon2015}.  We denote the eigenvalues $\{\xi_n\}_{n \geq 0}$ and corresponding eigenfunctions $\{\Xi_n\}_{n \geq 0}$  of $\calL$ which form a complete orthonormal basis of $\calH$. Let eigenvalues $\{\xi_n\}_{n \geq 0}$ be indexed in descending order of magnitude $... < \xi_n < \xi_1 < ... < \xi_0 = 0$ and let $\Xi_0 = 1$.

\subsection{Perturbation equation}

The decay of an initial density under uncontrolled (open-loop) overdamped Langevin dynamics (with constant volatility) to a stationary density is a classical topic \cite{Risken1986}. We address the question of decay of a perturbed density of the system \eqref{dyn} under the closed-loop steady state optimal controls. The perturbation analysis then corresponds to the time-varying FP equation \eqref{FPnew}. The proposed approach introduces a framework for analyzing stability of densities of controlled nonlinear stochastic systems with multiplicative noise.
\begin{theorem} \label{ThmgenPertMFG}
	{Let (\textbf{A0, A1, A2}) be true. If $(v^\infty,p^\infty)$ are steady state solutions to the optimality system (\ref{s1}, \ref{s2}), then the perturbation $\tilde{p} \in C^{1,2}([0, +\infty) \times \Rb^n)$ of the equilibrium density $p^\infty$, with the corresponding time-varying density $p = p^\infty(1 + \tilde{p})$ of the state, obeys
	\begin{align}
	\partial_t \tilde{p} =& \calL \tilde{p}, \label{pert}
	\end{align}
	where $\tilde{p}(0,x)$ is given and $\int_{\Rb^n}p^\infty(x)(1 + \tilde{p}(t,x)) \rd x = 1$ for all $t \geq 0$.}
\end{theorem}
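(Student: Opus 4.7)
The plan is to rewrite the closed-loop FP equation \eqref{FPnew} in operator form and then exploit the reversibility structure established in Lemmas \ref{LemmastationaryRevDiff} and \ref{LemmaRevDiffSelfAdjGen}. First, observe that the right-hand side of \eqref{FPnew} is exactly $\calL^\dagger p$, where $\calL^\dagger$ is the $L^2(\Rb^n)$ adjoint of the generator $\calL$ defined in Section \ref{sec:formulation}; thus \eqref{FPnew} reads $\partial_t p = \calL^\dagger p$. Substituting the ansatz $p = p^\infty(1 + \tilde{p})$ and using that $p^\infty$ is time-independent, the left-hand side collapses to $p^\infty \partial_t \tilde{p}$, while linearity on the right gives $\calL^\dagger p = \calL^\dagger p^\infty + \calL^\dagger(p^\infty \tilde{p})$. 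The first term vanishes because \textbf{(A0)} makes $p^\infty$ a solution of the stationary FP equation \eqref{s2}, which is precisely $\calL^\dagger p^\infty = 0$. The problem therefore reduces to establishing the intertwining identity $\calL^\dagger(p^\infty \tilde{p}) = p^\infty \calL \tilde{p}$.

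This intertwining relation is the main obstacle, and it is where assumption \textbf{(A1)} earns its keep through the reversibility it induces. By Lemma \ref{LemmaRevDiffSelfAdjGen}, $\calL$ is self-adjoint in $L^2(p^\infty(x)\,\rd x;\Rb^n)$; pairing against an arbitrary smooth compactly supported test function $g$ yields
\begin{equation*}
\int g\,(\calL\tilde{p})\,p^\infty\,\rd x \;=\; \int \tilde{p}\,(\calL g)\,p^\infty\,\rd x \;=\; \int g\,\calL^\dagger(p^\infty \tilde{p})\,\rd x,
\end{equation*}
where the first equality is the self-adjointness of $\calL$ on the weighted space and the second is the definition of $\calL^\dagger$ as the unweighted $L^2(\Rb^n)$ adjoint of $\calL$. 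Arbitrariness of $g$ produces the desired identity. Equivalently, one could verify it by direct calculation after substituting the detailed balance form \eqref{detailedBalance_ssCtrl} of the drift and the Gibbs form \eqref{stationarySolRevDiffCtrl} of $p^\infty$, but the self-adjoint viewpoint makes the cancellation automatic and avoids handling cross terms involving $\nabla \Phi \cdot \Sigma \nabla \tilde{p}$.

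Dividing by $p^\infty > 0$ then gives $\partial_t \tilde{p} = \calL \tilde{p}$ as claimed. The stated normalization $\int p^\infty(x)\bigl(1+\tilde{p}(t,x)\bigr)\,\rd x = 1$ reduces to $\int \tilde{p}(t,x)\,p^\infty(x)\,\rd x = 0$, which holds at $t = 0$ by consistency of the initial decomposition and is preserved for all $t \geq 0$ because the divergence form of $\calL^\dagger$ makes the full FP flow mass-conserving, or equivalently because $\calL \mathbf{1} = 0$ implies that constants are invariant under the $\calL$-evolution so the pairing $\langle \tilde{p},1\rangle_{L^2(p^\infty \rd x)}$ is constant in time.
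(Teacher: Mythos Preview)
Your proof is correct and follows essentially the same route as the paper: write \eqref{FPnew} as $\partial_t p = \calL^\dagger p$, substitute $p = p^\infty(1+\tilde p)$, cancel $\calL^\dagger p^\infty$ via \eqref{s2}, and then invoke the intertwining identity $\calL^\dagger(p^\infty f) = p^\infty \calL f$. The paper simply asserts this identity as a characterization of detailed balance (citing \cite{Cerfon2015}), whereas you derive it from the self-adjointness in Lemma~\ref{LemmaRevDiffSelfAdjGen} via a test-function argument; these are two phrasings of the same fact, so the proofs are effectively identical.
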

\begin{proof}
	Substituting the perturbation density {$\rp = \rp^\infty(1 + \tilde{\rp})$} in equation \eqref{FPnew} written in terms of the operator $\calL^\dagger$ and using equation \eqref{s2}, we have
	{\begin{align}
    \partial_t (\rp^\infty \tilde{\rp}) = & \calL^\dagger (\rp^\infty \tilde{\rp}).
	\end{align}}
	It can be verified \cite{Cerfon2015} that the process \eqref{dyn} satisfies the \textit{detailed balance} property (which is true due to equation \eqref{detailedBalance_ssCtrl}) if and only if the generator $\calL$ and its adjoint satisfy $\calL^\dagger(\rp^\infty f) = \rp^\infty \calL f$  for any smooth function $f$. Thus, we obtain the density perturbation equation \eqref{pert}. The mass conservation constraint follows directly from the same constraint on equation \eqref{FPnew}.
\end{proof}

\subsection{Stability analysis}

We show stability of the equilibrium density for a class of perturbations living in a Hilbert space  defined below.
\begin{definition}
Let \textbf{(A1)} hold. Denote the density $p^\infty(x)$ given in equation \eqref{stationarySolRevDiffCtrl}, where $(v^\infty, p^\infty)$ is a pair satisfying \textbf{(A0)}. Denote by $\calH$ the Hilbert space $L^2(p^\infty(x) \rd x; \Rb^{n})$. The class of mass preserving density perturbations is defined as $S_0 := \bigg{\{} q(x) {\in \calH}  \bigg| \big< 1,q(x) \big>_{\calH} = 0 \bigg{\}}$.
\end{definition}
\begin{definition} \label{def:S_lnp}
Let us denote the set of initial perturbed densities by $S = \bigg\{  p(0,x)= p^\infty(x)(1 + \tilde{\rp}(0,x))\bigg|  p(0,x) \geq 0, \tilde{p}(0,x)\in S_0\bigg\}$. We say the equilibrium stationary density $p^\infty(x)$ of the FP equation \eqref{FP} is asymptotically stable with respect to $S{}$ if there exists a solution $\tilde{p}(t,x)$ to the perturbation equation \eqref{pert} such that $\underset{t \rightarrow +\infty}{\lim}||\tilde{p}(t,x)||_{\calH}= 0$.
\end{definition}
Since we are concerned with stability of isolated stationary densities, we \emph{do not} assume that initial perturbations are mean preserving \cite{Gueant2009}. We give a proof of the stability of the equilibrium density of the closed-loop system controlled by the steady state optimal controls under perturbations belonging to the class defined above.
\begin{theorem}\label{ThmStab}
Let \textbf{(A0)} \textbf{(A1)} and \textbf{(A2)} be true. Let $\left(v^\infty(x), p^\infty(x) \right)$ be the unique stationary solution to the optimality system (\ref{s1}, \ref{s2}). If $\tilde{p}(0,x) \in S_0$ and $\{p_n\}_{0 \leq n \leq +\infty}$ are determined by 
\begin{equation}
\dot{p_n}(t) = \xi_n p_n(t), \label{pertcoeffsODE}
\end{equation}
then $\tilde{p}(t,x) = \sum_{n = 1}^{+\infty} p_n(t) \Xi_n(t)$ is the unique $\calH$ solution to the perturbation equation \eqref{pert}. $p^\infty(x)$ is asymptotically stable with respect to $S$.
\end{theorem}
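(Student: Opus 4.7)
The plan is to leverage the spectral machinery established in Lemmas \ref{LemmaRevDiffSelfAdjGen} and \ref{ThmpotlnpOperSpecGap}: the self-adjointness of $\calL$ in $\calH$ gives real eigenvalues and an orthonormal eigenbasis $\{\Xi_n\}_{n \geq 0}$, while the Poincar\'e inequality forces a spectral gap $\xi_1 < 0$ between the trivial eigenvalue $\xi_0 = 0$ (with eigenfunction $\Xi_0 = 1$) and the rest of the spectrum. The proof then has three pieces: constructing the candidate solution by spectral expansion, verifying it solves the perturbation equation \eqref{pert}, and establishing uniqueness plus exponential decay.

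First I would expand the initial data in the eigenbasis. Since $\tilde{p}(0,x) \in S_0 \subset \calH$, we may write $\tilde{p}(0,x) = \sum_{n \geq 0} p_n(0) \Xi_n(x)$ with $p_n(0) = \langle \tilde{p}(0,\cdot), \Xi_n \rangle_\calH$. The mass-preservation constraint $\langle 1, \tilde{p}(0,\cdot) \rangle_\calH = 0$ combined with $\Xi_0 \equiv 1$ yields $p_0(0) = 0$. Solving the decoupled ODEs \eqref{pertcoeffsODE} gives $p_n(t) = p_n(0) e^{\xi_n t}$, so the candidate $\tilde{p}(t,x) = \sum_{n \geq 1} p_n(0) e^{\xi_n t} \Xi_n(x)$ is well-defined in $\calH$ for all $t \geq 0$: by Parseval and $\xi_n \leq \xi_1 < 0$, we get $\|\tilde{p}(t,\cdot)\|_\calH^2 \leq e^{2\xi_1 t} \|\tilde{p}(0,\cdot)\|_\calH^2$, so the series converges absolutely and the mass constraint persists.

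Next I would verify the PDE. Because $\Xi_n$ are eigenfunctions of $\calL$, term-by-term differentiation (justified by the rapid decay of the coefficients for $t > 0$ and the assumed $C^{1,2}$ regularity of $\tilde{p}$) gives $\partial_t \tilde{p} = \sum_{n \geq 1} \xi_n p_n(0) e^{\xi_n t} \Xi_n = \sum_{n \geq 1} p_n(0) e^{\xi_n t} \calL \Xi_n = \calL \tilde{p}$, which is precisely \eqref{pert}. For uniqueness, let $w = \tilde{p}_1 - \tilde{p}_2$ be the difference of two solutions in $\calH$ with identical initial data; then $\partial_t w = \calL w$, $w(0,\cdot) = 0$, and by Theorem \ref{ThmgenPertMFG} the mass constraint forces $\langle 1, w(t,\cdot) \rangle_\calH = 0$. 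Computing $\tfrac{d}{dt}\|w\|_\calH^2 = 2\langle \calL w, w \rangle_\calH$ and applying the Poincar\'e inequality of Lemma \ref{ThmpotlnpOperSpecGap} to $w$ (which has zero $\Xi_0$-component) gives a Gr\"onwall-type bound forcing $w \equiv 0$.

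Finally, the same Parseval estimate $\|\tilde{p}(t,\cdot)\|_\calH^2 \leq e^{2\xi_1 t} \|\tilde{p}(0,\cdot)\|_\calH^2$ immediately delivers asymptotic stability with respect to $S$ in the sense of Definition \ref{def:S_lnp}, with an explicit exponential rate $|\xi_1|$ dictated by the spectral gap. The main obstacle I anticipate is the rigorous justification of the term-by-term manipulations: one must confirm that the spectral series represents a classical (or at least strong $\calH$) solution with the regularity $C^{1,2}$ required by Theorem \ref{ThmgenPertMFG}, and that the Poincar\'e inequality applies to the difference $w$ despite that we only know $w \in \calH$ a priori. This is handled by an approximation argument restricted to partial sums in the eigenbasis, combined with the density of smooth, finitely-supported eigenexpansions in the domain of $\calL$.
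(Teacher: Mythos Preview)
Your proposal is correct and follows essentially the same spectral-expansion strategy as the paper: expand $\tilde p(0,\cdot)$ in the orthonormal eigenbasis $\{\Xi_n\}$ of $\calL$, use $\tilde p(0,\cdot)\in S_0$ to kill the $n=0$ mode, solve the decoupled ODEs \eqref{pertcoeffsODE}, and invoke Parseval for decay. The only minor deviations are that the paper argues uniqueness directly from completeness of the eigenbasis (rather than your energy/Gr\"onwall argument) and passes to the limit via dominated convergence (rather than your explicit $e^{2\xi_1 t}$ bound), but these are cosmetic.
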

\begin{proof}
Since $\tilde{p}(0,x) \in \calH$ we have the unique representation $\tilde{p}(0,x) = \sum_{n = 0}^{+\infty} p_n(0) \Xi_n(x)$ where $p_n(0) = \left< \tilde{p}(0,x), \Xi_n(x) \right>_\calH < +\infty$ for all $n$.
Since $\{\Xi_n\}_{0 \leq n < +\infty}$ is a complete basis on $\calH$, any solution in $\calH$ to the PDE \eqref{pert} must have the form $\sum_{n = 0}^{+\infty} p_n(t) \Xi_n(x)$ where $\{p_n(t)\}_{0 \leq n \leq +\infty}$ are finite for all $t \in [0,+\infty)$. Substituting the selected form of the solution in the perturbation equation \eqref{pert} and using the eigen property $\calL \Xi_n = \xi_n \Xi_n$, we obtain the ODEs \eqref{pertcoeffsODE}. Due to the assumption \textbf{(A2)}, the eigen properties of the generator given in lemmas \ref{LemmaRevDiffSelfAdjGen} and \ref{ThmpotlnpOperSpecGap} hold. Using these properties yields the ODEs \eqref{pertcoeffsODE} with the unique solutions ${p_n}(t) = p_n(0) \re^{\xi_n t}$. Therefore $\tilde{p}(t,x) = \sum_{n = 0}^{+\infty} p_n(t) \Xi_n(x)$ wherein ${p_n}(t) = p_n(0) \re^{\xi_n t}$ is the unique $\calH$ solution to the perturbation equation \eqref{pert}. Further, $\tilde{p}(0,x) \in S_0$ implies that $p_0(0) = \left< \tilde{p}(0,x), \Xi_0(x) \right>_\calH = \left< \tilde{p}(0,x), 1 \right>_\calH = 0$ implying that $p_0(t) = 0$ for all $0 \leq t$, which completes the first part of the proof. \\
\indent Using Parseval's identity we have that $||\tilde{p}(t,x)||_{L^2(\Rb^n)} = \left( \sum_{n = 0}^{+\infty} p_n(t)^2 \right)^\frac{1}{2}$. Noting that $p_0(t) = 0$ and $p_n(t)^2 = p_n(0)^2 \re^{2 \xi_n t}$, where $\xi_n < 0$ for all $1 \leq n$ and using the Lebesgue dominated convergence theorem for the limit $t \rightarrow +\infty$, we have that $p^\infty(x)$ is asymptotically stable with respect to $S$.
\end{proof}

\section{Path integral connection}
\label{sec:path_integral_connection}

In this section we explore the remark \ref{pathIntegral} and apply the path integral solution of the value function to express the control law in terms of feasible stationary densities. We rewrite the HJB-FP optimality system investigated in section \ref{sec:design} below
\begin{align}
    0 =& q - c - \frac{1}{2}|\nabla v^\infty|^2_{R^{-1}} + \nabla v^\infty \cdot b + \frac{1}{2} \rtr (\Sigma \Delta v^\infty), \notag \\
    \partial_t p =& - \nabla \cdot ( (b - R^{-1} \nabla v^\infty) p) + \frac{1}{2} \Delta (\Sigma p), \notag
\end{align}
where $p = p^\infty(1 + \tilde{p})$ is the perturbed density and the optimal control to be used in the dynamics \eqref{dyn} is given by $u^\infty = - R^{-1} \nabla v^\infty$. The results of section \ref{sec:design} imply that this steady state optimal control is stabilizing if the constraints (\textbf{A0}, \textbf{A1}, \textbf{A2}) are true. Let us assume that the constraints hold true. The steady state value function, and the corresponding feedback control, may be approximated by instead solving the transient HJB equation \eqref{HJB} in the limit $T\rightarrow +\infty$ \cite{Opper2017} via the sampling based solution given by the Feynman-Kac lemma. We rewrite the transient HJB equation on substituting the control law as
\begin{align}
    -\partial_t v =& q - c - \frac{1}{2}|\nabla v|^2_{R^{-1}} + \nabla v \cdot b + \frac{1}{2} \rtr( \Sigma \Delta v). \label{HJB_final}
\end{align}
Recalling from the Feynman-Kac formula based path integral control formulation \cite{Kappen2005b} of the HJB equation, the constraint required for linearizing the PDE above is given by $\lambda  R^{-1} = \Sigma$, where $0< \lambda$ is a constant, and the change of variable used is $\Psi = \exp(-v/\lambda)$. The variable $\Psi$ governed by the linear equation
\begin{equation}
    -\partial_t \Psi = -\frac{(q - c) \Psi}{\lambda} + \nabla \Psi \cdot b + \frac{1}{2} \rtr( \Sigma \Delta \Psi ), \label{desireability}
\end{equation}
is referred to as the desire-ability \cite{Theodorou2011IPI} and characterizes the control as $u^* = \lambda R^{-1} \nabla \Psi / \Psi = \Sigma \nabla \Psi / \Psi$. Notice that $\lambda = 2$ then yields the constraint \textbf{(A1)}. The desire-ability may then be computed using the path integral formula 
\begin{equation}
    \Psi(t,y) = \Eb \left[ \exp\left( - \int_t^T \frac{q(x^y_s) - c}{\lambda}  \rd s \right) \psi(T,x^y_T) \right], \label{desireability_expression}
\end{equation}
where $x^y_s$ is the solution to the uncontrolled process \eqref{dyn} sampled by setting the control to zero with $x_t = y$ almost surely, and $c$ is the optimal cost for the OCP \textbf{(P)}, which can be computed as the first eigenvalue of the generator $\calL$ (\cite{todorov2009eigenfunction}, \cite{Bakshi2018TCNS}, \cite{Bakshi2020TAC}). The path integral solution above can then be used to approximate the time-invariant control law $u^\infty \approx u^* = -R^{-1} \nabla v$ with the value function $v^\infty \approx v = -\lambda \ln(\Psi)$ by using a large sampling horizon $T$ in equation \eqref{desireability_expression}, as in \cite{Opper2017}.

The path integral connection to the stabilizing optimal control considered in this work, is then obtained from equation \eqref{stationarySolRevDiffCtrl} as the relationship $p^\infty = \Psi^2 \exp(-\phi)/Z$ between the desire-ability given by \eqref{desireability_expression} (assuming a large time horizon $T$) and the stable equilibrium density, where $Z = \int p^\infty(x) \rd x$. Consequently, this connection allows us to analytically express the stabilizing steady state controls in terms of the corresponding stationary density as 
\begin{equation}
    u^\infty = R^{-1} (\nabla p^\infty + p^\infty \nabla \phi) / p^\infty. \label{optCtrl_pathIntegral}
\end{equation}
Therefore, using \textbf{(A1)}, the controlled system dynamics \eqref{dyn} under the stabilizing optimal control can be written exclusively in terms of the desired equilibrium density as
\begin{align}
    \rd x_s 
    %=& b(x_s) \rd s + \frac{\Sigma}{2} \left( \frac{\nabla p^\infty(x_s)}{p^\infty(x_s)} + \nabla \phi(x_s) \right) \rd s + \sigma(x_s) \rd w_s \notag \\
    =& \frac{\Sigma(x_s)}{2} \rd s + \frac{\Sigma}{2} \frac{\nabla p^\infty(x_s)}{p^\infty(x_s)} \rd s + \sigma(x_s) \rd w_s. \label{opt_dyn}
\end{align}
\begin{remark}
    The interpretation above implies that if there exists a stationary density corresponding to the system \eqref{dyn} under a control law satisfying the OCP \textbf{(P)} and constraints \textbf{(A0, A1, A2)}, then the steady state control law is given by \eqref{optCtrl_pathIntegral} and the optimally controlled dynamics are given by \eqref{opt_dyn}.
\end{remark}

Further, given a (feasible) density $p^\infty$ for which we would like to design an optimal control formulation \textbf{(P)}, we can use the path integral connection studied here to analytically calculate the corresponding required cost function. This is the \textit{inverse optimal control problem} corresponding to the OCP \textbf{(P)}, consisting of the explicit design of the cost function to attain a desired steady state density. In \cite{Opper2017}, the inverse optimal control problem was solved numerically by approximating the steady state desire-ability using a large time horizon, as explained previously. Using a similar idea, we may now approximate $\Psi = \sqrt{Z p^\infty \exp(\phi)}$ and apply this relationship to the equation \eqref{desireability} with $\partial_t \Psi$ set to zero (since we choose a large T), in order to obtain the required cost function $q$ analytically, for the case of controlled reversible diffusions. This will result in the analytical solution to the inverse OCP corresponding to \textbf{(P)} in the case of reversible diffusion dynamics, which are a special case of the more general nonlinear stochastic processes considered in \cite{Opper2017}.

\section{Conclusions}
\label{sec:conclusions}

This work addressed the design of stabilizing optimal control for a single agent or large population of identical, non-cooperative, non-networked agents obeying nonlinear dynamics with multiplicative noise. The first contribution is a framework for analyzing the nonlinear stability of the FP equation governing the density evolution of a continuum of such agents. The reversible diffusion agent dynamics are chosen for analysis, since this allows an (\textit{non-Gaussian}) analytical form of the equilibrium Gibbs density, thus facilitating stability analysis. The second contribution is providing design constraints on the state and control cost functions in the case of control affine dynamics with quadratic (state-dependent) control cost. The stability results apply to general nonlinear density perturbations belonging to a Hilbert space. The final contribution is to apply the connection of the noise-intensity | control-authority relationship with the path integral solution to obtain a control law expression exclusively in terms of the desired stationary density for the distribution of agents. This (density based feedback) form of the control law allows a heuristic analytical solution to the inverse OCP.

The proposed framework generalizes the work  \cite{Bakshi2020TAC} concerning stability of non-networked large size populations of optimally controlled agents excited by constant volatility white noise. However, the approach presented here is distinct from the variable transformation approach in \cite{Bakshi2020TAC} and analyzes stability of the non-transformed FP equation. As such, the techniques presented here can be (in contrast to \cite{Bakshi2020TAC}) directly applied to the case of networked continuum systems, that is the MFGs, with agents excited by multiplicative noise. In general, there are multiple equilibria corresponding to the two-way coupled HJB-FP optimality systems of such MFGs, especially in the case of agents with nonlinear dynamics. Further, the stability analysis in that case corresponds to that of both the HJB \textit{and} FP equations as opposed to just the FP equation, as in this paper. Investigation of the control design constraints for specific MFG formulations (\cite{GueantLions2011}, \cite{Bakshi2018TCNS}) remains to be explored. Extension of this framework to the case of second order dynamics \cite{Halder2019} and multiplicative noise also needs to be addressed.

\bibliographystyle{unsrt}
\bibliography{References}

\begin{thebibliography}{10}

\bibitem{Wolpert_2000}
Daniel~M Wolpert and Z~Ghahramani.
\newblock Computational principles of movement neuroscience.
\newblock {\em Nature Neuroscience}, 3:1212--1217, 2000.

\bibitem{Todorov2002}
E.~Todorov and M.~I. Jordan.
\newblock Optimal feedback control as a theory of motor coordination.
\newblock {\em Nature Neuroscience}, 5(11):1226--1235, 2002.

\bibitem{gershon2001}
E.~Gershon, U.~Shaked, and I.~Yaesh.
\newblock H infinity control and filtering of discrete-time stochastic systems
  with multiplicative noise.
\newblock {\em Automatica}, 37(3):409 -- 417, 2001.

\bibitem{Balakrishnan2002}
{Fan Wang} and V.~{Balakrishnan}.
\newblock Robust kalman filters for linear time-varying systems with stochastic
  parametric uncertainties.
\newblock {\em IEEE Transactions on Signal Processing}, 50(4):803--813, April
  2002.

\bibitem{Sardeshmukh2005}
P.~Sura, M.~Newman, C.~Penland, and P.~Sardeshmukh.
\newblock Multiplicative noise and non-gaussianity: A paradigm for atmospheric
  regimes?
\newblock {\em Journal of the Atmospheric Sciences}, 62(5):1391--1409, 2005.

\bibitem{Todorov:2004}
Emanuel Todorov.
\newblock Optimality principles in sensorimotor control.
\newblock {\em Nat Neurosci}, 7:907--915, 2004.

\bibitem{Todorov2005c}
E.~Todorov.
\newblock Stochastic optimal control and estimation methods adapted to the
  noise characteristics of the system.
\newblock {\em Neural Computation}, 17:1084--1108, May 2005.

\bibitem{Haussmann1972}
U.~G. Haussmann.
\newblock Stabilization of linear systems with multiplicative noise.
\newblock In {\em Stability of Stochastic Dynamical Systems}, pages 125--130,
  Berlin, Heidelberg, 1972. Springer Berlin Heidelberg.

\bibitem{Phillis_1989}
Y.~A. Phillis.
\newblock Estimation and control of systems with unknown covariance and
  multiplicative noise.
\newblock {\em IEEE Transactions on Automatic Control}, 34(10):1075--1078, Oct
  1989.

\bibitem{Krstic2001}
{Hua Deng}, M.~{Krstic}, and R.~J. {Williams}.
\newblock Stabilization of stochastic nonlinear systems driven by noise of
  unknown covariance.
\newblock {\em IEEE Transactions on Automatic Control}, 46(8):1237--1253, Aug
  2001.

\bibitem{Todorov_MatrixInequalities}
Weiwei Li, E.~Todorov, and R.~E. Skelton.
\newblock Estimation and control of systems with multiplicative noise via
  linear matrix inequalities.
\newblock In {\em Proceedings of the 2005 American Control Conference, 2005},
  pages 1811--1816 vol. 3, June 2005.

\bibitem{Ding2016}
Jian Ding, Yuval Peres, and Gireeja Ranade.
\newblock A tiger by the tail: When multiplicative noise stymies control.
\newblock {\em 2016 IEEE International Symposium on Information Theory (ISIT)},
  pages 1994--1998, 2016.

\bibitem{Bakshi_CDC2016}
K.~Bakshi and E.~Theodorou.
\newblock Infinite dimensional control of doubly stochastic jump diffusions.
\newblock In {\em Proceedings of 2016 IEEE 55th Conference on Decision and
  Control (CDC)}, pages 1145--1152, Las Vegas, USA, Dec 2016.

\bibitem{Pham2014}
H.~Pham.
\newblock Feynman-{Kac} representation of fully nonlinear {PDEs} and
  applications.
\newblock {\em Acta Mathematica Vietnamica}, 40:255--269, June 2015.

\bibitem{Yannis2016}
I.~{Exarchos} and E.~A. {Theodorou}.
\newblock Learning optimal control via forward and backward stochastic
  differential equations.
\newblock In {\em 2016 American Control Conference (ACC)}, pages 2155--2161,
  2016.

\bibitem{Bakshi_ACC2017}
K.~S. Bakshi, D.~D. Fan, and E.~A. Theodorou.
\newblock Stochastic control of systems with control multiplicative noise using
  second order fbsdes.
\newblock In {\em 2017 American Control Conference (ACC)}, pages 424--431, May
  2017.

\bibitem{Theodorou2019RSS}
Ziyi Wang, Marcus Pereira, Ioannis Exarchos, and Evangelos Theodorou.
\newblock Learning deep stochastic optimal control policies using
  forward-backward sdes.
\newblock In {\em Proceedings of Robotics: Science and Systems},
  FreiburgimBreisgau, Germany, June 2019.

\bibitem{LiBerman2017}
H.~{Li}, C.~{Feng}, H.~{Ehrhard}, Y.~{Shen}, B.~{Cobos}, F.~{Zhang},
  K.~{Elamvazhuthi}, S.~{Berman}, M.~{Haberland}, and A.~L. {Bertozzi}.
\newblock Decentralized stochastic control of robotic swarm density: Theory,
  simulation, and experiment.
\newblock In {\em 2017 IEEE/RSJ International Conference on Intelligent Robots
  and Systems (IROS)}, pages 4341--4347, 2017.

\bibitem{GueantLions2011}
Lions~PL. Gu\'eant~O., Lasry~JM.
\newblock {\em Mean Field Games and Applications:Paris-Princeton Lectures on
  Mathematical Finance 2010}.
\newblock Springer, Berlin, Heidelberg, 2011.

\bibitem{yinmehmeysha12}
H.~Yin, P.~G. Mehta, S.~P. Meyn, and U.~V. Shanbhag.
\newblock Synchronization of coupled oscillators is a game.
\newblock {\em IEEE Transactions on Automatic Control}, 57(4):920--935, April
  2012.

\bibitem{grover2018mean}
Piyush Grover, Kaivalya Bakshi, and Evangelos~A Theodorou.
\newblock A mean-field game model for homogeneous flocking.
\newblock {\em Chaos: An Interdisciplinary Journal of Nonlinear Science},
  28(6):061103, 2018.

\bibitem{Bakshi2018TCNS}
Kaivalya Bakshi, Piyush Grover, and Evangelos~A Theodorou.
\newblock On mean field games for agents with langevin dynamics.
\newblock {\em Transactions on Control of Networked Systems}, 6(4):1451--1460,
  Feb 2019.

\bibitem{Bakshi2020TAC}
K.~{Bakshi}, D.~D. {Fan}, and E.~A. {Theodorou}.
\newblock Schrodinger approach to optimal control of large-size populations.
\newblock {\em IEEE Transactions on Automatic Control}, Early Access, 2020.

\bibitem{Lasry2007}
J.~Lasry and P.~Lions.
\newblock Mean field games.
\newblock {\em Japanese Journal of Mathematics}, 2(1):229--260, Mar 2007.

\bibitem{Pavliotis2014}
G.~Pavliotis.
\newblock {\em Stochastic Processes and Applications}.
\newblock Springer, 1st edition, 2014.

\bibitem{YongBook_1958}
J.~Yong and X.~Zhou.
\newblock {\em {Stochastic Controls: Hamiltonian Systems and HJB Equations}}.
\newblock Springer, 1999.

\bibitem{Fleming2006}
W.~H. Fleming and H.~M. Soner.
\newblock {\em Controlled Markov processes and viscosity solutions}.
\newblock Applications of mathematics. Springer, New York, 2nd edition, 2006.

\bibitem{BardiBook}
M.~Bardi and I.~Capuzzo~Dolcetta.
\newblock {\em Optimal Control and Viscosity Solutions of
  Hamilton-Jacobi-Bellman Equations}.
\newblock Systems \& Control: Foundations \& Applications. Birkh\"auser Boston,
  Boston, MA, with appendices by maurizo falcone and pierpaolo soravia edition,
  1997.

\bibitem{Cerfon2015}
M.~Cerfon.
\newblock Detailed balance and eigenfunction methods.
\newblock {\em Applied Stochastic Analysis Lecture 12}, 2011.

\bibitem{Kappen2005a}
H.~J. Kappen.
\newblock Linear theory for control of nonlinear stochastic systems.
\newblock {\em Phys. Rev. Lett.}, 95:200201, Nov 2005.

\bibitem{Bakry2014}
Dominique Bakry, Ivan Gentil, and Michel Ledoux.
\newblock {\em Poincar{\'e} Inequalities}, pages 177--233.
\newblock Springer International Publishing, Cham, 2014.

\bibitem{Risken1986}
H.~Risken.
\newblock {\em The Fokker-Planck Equation: Methods of Solution and
  Applications}.
\newblock Number~16 in Springer Series in Synergetics. Springer-Verlag, 1984.

\bibitem{Gueant2009}
O.~Gu\'eant.
\newblock A reference case for mean field games models.
\newblock {\em Journal de Mathematiques Pures et Appliquees}, 92(3):276--294,
  2009.

\bibitem{Opper2017}
Manfred Opper.
\newblock An estimator for the relative entropy rate of path measures for
  stochastic differential equations.
\newblock {\em Journal of Computational Physics}, 330:127 -- 133, 2017.

\bibitem{Kappen2005b}
H.~J. Kappen.
\newblock Path integrals and symmetry breaking for optimal control theory.
\newblock {\em Journal of Statistical Mechanics: Theory and Experiment},
  11:P11011, 2005.

\bibitem{Theodorou2011IPI}
E.~Theodorou.
\newblock {\em Iterative Path Integral Stochastic Optimal Control: Theory and
  Applications to Motor Control}.
\newblock PhD thesis, University of Southern California, Los Angeles, CA, USA,
  2011.

\bibitem{todorov2009eigenfunction}
E~Todorov.
\newblock Eigenfunction approximation methods for linearly-solvable optimal
  control problems.
\newblock In {\em IEEE Symposium on Adaptive Dynamic Programming and
  Reinforcement Learning}, pages 161--168, 2009.

\bibitem{Halder2019}
Kenneth~F. Caluya and Abhishek Halder.
\newblock Wasserstein proximal algorithms for the schr\"odinger bridge problem:
  Density control with nonlinear drift.
\newblock {\em arXiv:1912.01244v1[math.OC]}, Dec 2019.

\end{thebibliography}

\end{document}